\def\R{\mathbb R}
\def\N{\mathbb N}
\def\Rp{\R^{+}}
\def\Sn{S^{n-1}}
\def\cal{\mathcal}
\def\F{{\cal F}}
\def\a{\alpha}
\def\b{\beta}
\def\de{\delta}
\def\e{\varepsilon}
\def\k{\kappa}
\def\l{\lambda}
\def\s{\sigma}
\def\vphi{\varphi}
\newtheorem{theorem}{Theorem}
\newtheorem{corollary}[theorem]{Corollary}
\begin{document}

\title{Quantitative stability estimate for an optimization problem under constraints}

\author{H. Hajaiej}

\begin{abstract} A class of functionals maximized by characteristic functions of balls is identified by a mass transportation argument.
\end{abstract}

\maketitle

A variational approach to the study of standing waves for the nonlinear Schr\"odinger equation leads to the minimization of functionals like
\begin{equation}\label{optic functional}
\frac{1}2\int_{\R^n}|\nabla u(x)|^2\,dx-\int_{\R^n}F(|x|,u(x))\,dx\,,
\end{equation}
over all $u\in H^1(\R^n)$, $u\ge0$, such that $\int_{\R^n}u^2=1$, see \cite{HS} (here $n\in\N, n\ge1$). The function $F$ describes the index of refraction of the media in which the wave propagates. A typical example is
\[
F(r,s)=p(r)s^2+q(r)s^{d}\,,\quad 2<d<2+\frac4{n}\,,
\]
where $p$ and $q$ are positive decreasing functions, and the constraint on $d$ has to be assumed so to avoid non-existence issues due to unbalanced scalings. The two terms of the energy \eqref{optic functional} are in competition. Indeed, if we try to maximize $\int_{\R^n}F(|x|,u(x))dx$ under the additional constraint that $u\le a$, then the unique maximizer is given by the function $a\, 1_{r B}$, having infinite Dirichlet integral (here $B$ is the Euclidean unit ball and $r>0$ is such that $\int_{\R^n}(a\,1_{rB})^2=1$).

In this note we identify a simple sufficient condition on an integrand $F$ ensuring that $\int_{\R^n}F(|x|,u(x))dx$ presents this behavior. More precisely, we are going to consider integrands $F:\Rp\times\Rp\to\Rp$ (here $\Rp:=[0,\infty)$), such that
\begin{enumerate}
\item[(H1)] for every $s\in\Rp$, $F(\cdot,s)$ is decreasing; for a.e. $r\in\Rp$, $F(r,\cdot)$ is continuous on $\Rp$;
\item[(H2)] there exist $\a\in L^1(\Rp,r^{n-1}dr)$ and a locally bounded function $\b:\R\to\Rp$ such that, for a.e. $r\in\Rp$ and every $s\in\Rp$,  $F(r,s)\le \a(r)\b(s)$.
\end{enumerate}
Given $a>0$ and $p\ge 1$, we consider the convex subset of $L^p(\R^n)$
\[
X:=\left\{u\in L^p(\R^n):0\le u\le a\,, \int_{\R^n}u^p\le 1\right\}\,,
\]
and define a functional $\F$ on $X$ by setting
\begin{gather*}
\F(u)=\int_{\R^n}F(|x|,u(x))dx\,,\quad\forall u\in X\,.
\end{gather*}
Note that, thanks to (H1) and (H2), $x\in\R^n\mapsto F(|x|,u(x))$ is measurable and $\F(u)\in\Rp$ for every $u\in X$. We are going to prove the following theorem:

\begin{theorem}\label{thm: main} Let $a>0$, $p\ge 1$, and let $F$ be such that (H1) and (H2) hold true. Assume that there exists $t>0$ such that the ball $E=\{x\in\R^n:F(|x|,a)>t\}$ satisfies $a^p|E|=1$, and that, for a.e. $r\in\Rp$ and for every $\l\in[0,1]$,
\begin{equation}\label{condition}
F(r,\l a)\le \l^p F(r,a)\,.
\end{equation}
Then the function $w=a\,1_E$ is a maximum of $\F$ on $X$. Moreover, if $F(\cdot,a)$ is strictly decreasing, then $w=a\,1_E$ is the unique maximizer of $\F$ on $X$.
\end{theorem}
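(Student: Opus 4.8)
The plan is to prove the theorem by using the structural hypothesis \eqref{condition} to bound $\F$ from above by a \emph{linear} functional of $u^p$, and then to maximize that linear functional by an elementary rearrangement (bathtub-principle, or mass-transportation) estimate, in which the normalization $a^p|E|=1$ supplies precisely the admissible amount of mass. \emph{Step 1 (linearization).} Fix $u\in X$. Since $0\le u\le a$, for a.e. $x$ we may write $u(x)=\l(x)a$ with $\l(x):=u(x)/a\in[0,1]$; applying \eqref{condition} with $r=|x|$ (legitimate, since the $x\mapsto|x|$-preimage of an $\a$-null set of radii is Lebesgue-null in $\Rn$) gives
\[
F(|x|,u(x))\le\l(x)^p F(|x|,a)=\frac{u(x)^p}{a^p}\,F(|x|,a)\qquad\text{for a.e. }x\in\Rn.
\]
Set $g:=u^p/a^p$ and $h(x):=F(|x|,a)$. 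Then $0\le g\le1$ and $\int_{\Rn}g=a^{-p}\int_{\Rn}u^p\le a^{-p}=|E|$, while $h$ is measurable by (H1), nonnegative, and lies in $L^1(\Rn)$ by (H2); integrating the displayed inequality we obtain $\F(u)\le\int_{\Rn}g\,h$.

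\emph{Step 2 (the linear maximum, and existence).} I claim that $\int_{\Rn}g\,h\le\int_E h$ whenever $0\le g\le1$ and $\int_{\Rn}g\le|E|$. By the definition of $E$ one has $h>t$ on $E$ and $h\le t$ on $E^c$, so
\[
\int_{\Rn}(g-1_E)h=\int_{E^c}g\,h+\int_E(g-1)h\le t\int_{E^c}g+t\int_E(g-1)=t\Big(\int_{\Rn}g-|E|\Big)\le0,
\]
using $g\ge0,\ h\le t$ on $E^c$ in the first term, $g-1\le0,\ h\ge t$ on $E$ in the second, and $\int g\le|E|$ at the end. Consequently
\[
\F(u)\le\int_{\Rn}g\,h\le\int_E h=\int_E F(|x|,a)\,dx\le\int_{\Rn}F(|x|,a\,1_E(x))\,dx=\F(w),
\]
the last inequality because $F\ge0$. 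Since $w=a\,1_E$ satisfies $0\le w\le a$ and $\int_{\Rn}w^p=a^p|E|=1$, we have $w\in X$, and hence $w$ maximizes $\F$ on $X$.

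\emph{Step 3 (uniqueness).} Suppose now that $F(\cdot,a)$ is strictly decreasing and that $\F(u)=\F(w)$ for some $u\in X$. Then every inequality in Steps 1--2 must be an equality; in particular $\int_{\Rn}g=|E|$, and the two sub-estimates in Step 2 become $\int_{E^c}g(t-h)=0$ and $\int_E(1-g)(h-t)=0$. Strict monotonicity of $F(\cdot,a)$ forces $|\{x:F(|x|,a)=t\}|=0$, whence $h<t$ a.e. on $E^c$ while $h>t$ on $E$; as both integrands are nonnegative, we get $g=0$ a.e. on $E^c$ and $g=1$ a.e. on $E$, i.e. $g=1_E$, i.e. $u^p=a^p1_E$, i.e. $u=a\,1_E=w$ a.e.

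I expect the only genuinely delicate point to be the equality analysis of Step 3: one must convert strict monotonicity of $F(\cdot,a)$ into the measure-zero statement for the critical level set $\{x:F(|x|,a)=t\}$, and check that every inequality in the chain collapses, including the ``hidden'' constraint $\int g=|E|$. Everything else is routine — measurability and finiteness of $\F$ on $X$ come from (H1)--(H2), the passage from ``a.e. $r$'' in \eqref{condition} to ``a.e. $x$'' is a standard null-set argument, and the geometric fact that $E$ is a ball (from monotonicity of $F(\cdot,a)$) is not actually used in the estimate itself.
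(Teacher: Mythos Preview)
Your proof is correct, but it takes a genuinely different route from the paper's. Both arguments begin with the same linearization step --- the paper's inequality \eqref{u<v} is exactly your Step~1, turning $\F(u)$ into $\int (u^p/a^p)\,F(|x|,a)\,dx$. From there, however, the paper introduces an intermediate comparison function $v=a\,1_G$ (with $G$ a star-shaped set built ray-by-ray from $u$), and then invokes Brenier's theorem twice: once along each radial direction to show $\F(u)\le\F(v)$, and once between the sets $G\setminus E$ and $E\setminus G$ to show $\F(v)\le\F(w)$. You bypass all of this with the bathtub principle, bounding the linear functional $g\mapsto\int gh$ directly by $\int_E h$ via the single displayed computation in your Step~2; your equality analysis for uniqueness is likewise a direct reading of when the bathtub inequality is sharp. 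Your approach is shorter and entirely elementary --- no optimal transport is needed for Theorem~\ref{thm: main}. What the paper's approach buys is that the explicit transport maps $T_\nu$ and $T$, together with the remainder identities they generate, are precisely what is exploited in Corollary~\ref{cor: quant} to obtain the quantitative stability estimate; your bathtub argument, while sufficient for the theorem itself, does not obviously yield those quantitative bounds without additional work.
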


The proof of Theorem \ref{thm: main} is based on a basic result in mass transportation theory, namely the Brenier Theorem \cite{Brenier} (see also \cite{McCann1}): given two Radon measures $\mu_1,\mu_2$ on $\R^n$, both absolutely continuous with respect to the Lebesgue measure and such that $\mu_1(\R^n)=\mu_2(\R^n)$, there exists a convex function $\vphi:\R^n\to[0,\infty]$ and a Borel measurable map $T:\R^n\to\R^n$ such that $T(x)=\nabla\vphi(x)$ at a.e. $x\in\R^n$ and $T$ pushes forward $\mu_1$ into $\mu_2$, i.e.
\begin{equation}\label{push-forward general}
\int_{\R^n}H(y)d\mu_2(y)=\int_{\R^n}H(T(x))d\mu_2(x)\,,
\end{equation}
for every Borel function $H:\R^n\to[0,\infty]$. The mass transportation approach to Theorem \ref{thm: main} allows also to deduce a quantitative stability estimate on the maximality of $w=a1_E$, see Corollary \ref{cor: quant} below. We pass now to prove Theorem \ref{thm: main}.


\begin{proof}[Proof of Theorem \ref{thm: main}] By \eqref{condition}, as $F(r,\cdot)$ is continuous for a.e. $r\in\Rp$, we deduce that $F(r,0)=0$ for a.e. $r\in\Rp$. We let $\Sn=\{x\in\R^n:|x|=1\}$, and denote by $\s$ the $(n-1)$-dimensional Hausdorff measure restricted to $\Sn$.

{\it Step one:} Let us fix $u\in X$ and construct an auxiliary function $v=a\,1_G$ by letting
\[
G:=\left\{x\in\R^n: |x|<\k\left(\frac{x}{|x|}\right)\right\}\,,
\]
where we have introduced $\k:\Sn\to\Rp$,
\begin{equation}\label{def of knu}
\k(\nu):=\left(\frac{n}{a^p}\int_0^\infty u(r\nu)^pr^{n-1}dr\right)^{1/n}\,,\quad \nu\in\Sn\,.
\end{equation}
Note that $v(r\nu)=a\,1_{[0,\k(\nu)]}(r)$, and that the value of $\k(\nu)$ has been chosen so that the measures
\[
1_{\Rp}(r)u(r\nu)^p\,r^{n-1}\,dr\,\quad\mbox{and}\quad 1_{\Rp}(r)v(r\nu)^p\,r^{n-1}\,dr\,,
\]
have the same total mass on $\R$. For every $\nu\in\Sn$, let $T_\nu$ denote the map given by Brenier theorem. By construction $T_\nu$ is increasing on $\R$, moreover, thanks to \eqref{push-forward general} we have
\begin{equation}\label{pushforwardTnu}
\int_{\Rp}H(r)v(\nu r)^pr^{n-1}dr=\int_{\Rp}H(T_\nu(r))u(\nu r)^pr^{n-1}dr\,,
\end{equation}
for every Borel function $H:\R\to[0,\infty]$: in particular $T_\nu(r)\in[0,k(\nu)]$ for a.e. $r\in\R$. Note also that, as $0\le u\le a$, we clearly have
\begin{equation}\label{Tnu sotto r}
T_\nu(r)\le r\,,\quad \mbox{for a.e. $r\in\Rp$.}
\end{equation}
We are going to prove that $\F(u)\le\F(v)$. By \eqref{condition} we have that
\begin{eqnarray}
\F(u)=\int d\s(\nu)\int_{\Rp}F(r,u(r\nu))r^{n-1}dr\le \label{u<v}
\int d\s(\nu)\int_{\Rp}\frac{F(r,a)}{a^p}u(r\nu)^{p}r^{n-1}dr\,,
\end{eqnarray}
while at the same time, thanks to \eqref{pushforwardTnu}
\begin{eqnarray*}
\F(v)&=&\int d\s(\nu)\int_{0}^{\k(\nu)}F(r,a)r^{n-1}dr
=\int d\s(\nu)\int_{\Rp}\frac{F(r,a)}{a^p}v(r\nu)^pr^{n-1}dr
\\
&=&\int d\s(\nu)\int_{\Rp}\frac{F(T_\nu(r),a)}{a^p}u(r\nu)^pr^{n-1}dr\,,
\end{eqnarray*}
By (H1) and \eqref{Tnu sotto r} it follows immediately that $\F(u)\le\F(v)$.

\medskip

{\it Step two:} We are going to prove that $\F(v)\le\F(w)$. We start by noticing that $|E|=|G|$. Indeed by \eqref{def of knu}
\[
|G|=\int \frac{\k(\nu)^n}n\,d\s(\nu)=\frac1{a^p}\int_{\R^n}u^p=|E|\,.
\]
In particular $|E\setminus G|=|G\setminus E|$, and, without loss of generality, $|E\setminus G|>0$. Consider the Brenier map $T:\R^n\to\R^n$ between $1_{E\setminus G}(x)dx$ and $1_{G\setminus E}(y)dy$. By \eqref{push-forward general},
\begin{equation}\label{push-forward T}
\int_{E\setminus G}H(y)dy=\int_{G\setminus E}H(T(x))dx\,,
\end{equation}
for every Borel function $H:\R^n\to[0,\infty]$. On choosing $H(y)=F(y,a)$ we find
\begin{eqnarray}\label{step3}
\int_{E\setminus G}F(|x|,a)dx
=\int_{G\setminus E}F(|T(x)|,a)dx\,,
\end{eqnarray}
while, on taking $H(y)=1_{E\setminus G}(y)$, we prove that $T(x)\in E\setminus G$ for a.e. $x\in G\setminus E$. As $E$ is a ball, this last remark implies that
\begin{eqnarray}\label{T sotto |x|}
|T(x)|\le |x|\,,\quad\mbox{for a.e. $x\in G\setminus E$\,.}
\end{eqnarray}
On combining \eqref{T sotto |x|} with \eqref{step3} we get
\begin{eqnarray}\nonumber
\F(w)&=&\int_{G\cap E}F(|x|,a)dx+\int_{E\setminus G}F(|x|,a)dx
\\\nonumber
&=&\int_{G\cap E}F(|x|,a)dx+\int_{G\setminus E}F(|T(x)|,a)dx
\\\label{v<w}
&\ge&\int_{G\cap E}F(|x|,a)dx+\int_{G\setminus E}F(|x|,a)dx=\F(v)\,,
\end{eqnarray}
and the conclusion follows.

Let us now assume that for every $s\in\Rp$ the function $F(\cdot,a)$ is strictly decreasing, and consider a function $u\in X$ that maximizes $\F$ on $X$, i.e. such that $\F(u)=\F(w)$. We want to show that $u=w$ a.e. on $\R^n$. Let us prove that $G=E$ up to null sets. Indeed, let $R$ denote the radius of the ball $E$. If $|G\setminus E|>0$, then we can consider $T$ and repeat the above argument. Since $F(\cdot,a)$ is strictly decreasing and equality holds in \eqref{v<w}, we find that $|T(x)|=|x|$ for a.e. $x\in G\setminus E$. Thus $|T(x)|\ge R$ for a.e. $x\in\R$; but $T(x)\in E\setminus G$ for a.e. $x\in G\setminus E$, therefore it must be $|G\setminus E|=0$, a contradiction. As $G=E$ up to null sets, we have $\k(\nu)=R$ for every $\nu\in\Sn$. The equality sign in \eqref{u<v} implies that, for $\s$-a.e. $\nu\in\Sn$, $T_\nu(r)=r$ for a.e. $r\in\{t:u(\nu t)>0\}$. As $0\le T_\nu\le\k(\nu)=R$, by \eqref{pushforwardTnu} and \eqref{Tnu sotto r} we deduce that $\{t:u(\nu t)>0\}\subset[0,R]$ for $\s$-a.e. $\nu\in\Sn$. On applying \eqref{pushforwardTnu} to $H=1_{\{t:u(\nu t)>0\}}$ we deduce $u(\nu r)=a$ on $\{t:u(\nu t)>0\}$, therefore that $u(\nu r)=a\,1_{[0,R]}(r)$. In particular $u=w$ a.e. on $\R^n$.
\end{proof}

We come now to a quantitative stability estimate:

\begin{corollary}\label{cor: quant} Under the assumptions of Theorem \ref{thm: main}, let us assume the existence of $\l>0$ such that, whenever $0<r_1<r_2$,
\begin{equation}\label{l}
F(r_1,a)\ge F(r_2,a)+\l(r_2-r_1)\,.
\end{equation}
Then, for every $u\in X$ we have that
\begin{equation}\label{tesi cor}
\int_{\R^n}|u-w|^p\le C(n,p,a)\sqrt{\frac{\F(w)-\F(u)}\l}\,.
\end{equation}
where $C(n,p,a)$ is a constant depending only on $n$, $p$ and $a$.
\end{corollary}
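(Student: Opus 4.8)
The plan is to re-run the two transportation steps of the proof of Theorem \ref{thm: main}, this time retaining the quantitative gain afforded by \eqref{l}, and then to convert the resulting ``transport cost'' into control of $\|u-w\|_p$. Keep $v=a\,1_G$, $\k(\nu)$, $T_\nu$ and $T$ as in that proof, and let $R$ denote the radius of the ball $E$, so that $|B|\,R^n=|E|=a^{-p}$. Since $\|u-w\|_p\le\|u-v\|_p+\|v-w\|_p$ and $t\mapsto t^p$ is convex, it is enough to bound $\int_{\Rn}|u-v|^p$ and $\int_{\Rn}|v-w|^p$ separately. Applying \eqref{l} with $(r_1,r_2)=(T_\nu(r),r)$ inside the proof of $\F(u)\le\F(v)$ in Step one (the estimate following \eqref{u<v}), and with $(r_1,r_2)=(|T(x)|,|x|)$ inside the chain \eqref{v<w} of Step two, one gets $\F(w)-\F(u)\ge\l\,(A+B)$, where
\[
A:=\frac1{a^p}\int d\s(\nu)\int_{\Rp}(r-T_\nu(r))\,u(r\nu)^p r^{n-1}dr\ge0,\qquad B:=\int_{G\setminus E}(|x|-|T(x)|)\,dx\ge0;
\]
in particular each of $A$, $B$ is at most $(\F(w)-\F(u))/\l$, and it will suffice to prove $\int_{\Rn}|v-w|^p\le C(n,p,a)\sqrt B$ and $\int_{\Rn}|u-v|^p\le C(n,p,a)\sqrt A$.

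\emph{The term $\int_{\Rn}|v-w|^p$.} Since $|G|=|E|$ we have $\int_{\Rn}|v-w|^p=a^p|G\triangle E|=2a^p\,m$ with $m:=|G\setminus E|$. Taking $H(y)=|y|$ in \eqref{push-forward T} gives $\int_{G\setminus E}|T(x)|\,dx=\int_{E\setminus G}|y|\,dy$, hence $B=\int_{G\setminus E}|x|\,dx-\int_{E\setminus G}|y|\,dy$. Because $E$ is a ball of radius $R$, the set $G\setminus E$ lies in $\{|x|\ge R\}$ and $E\setminus G$ in $\{|x|\le R\}$, each of measure $m$; minimizing the first integral and maximizing the second over all such sets — the extremal sets being concentric spherical shells — and then using the strict convexity of $s\mapsto s^{(n+1)/n}$, one obtains $B\ge c(n,a)\,m^2$ with an explicit constant $c(n,a)>0$. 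Therefore $\int_{\Rn}|v-w|^p=2a^p m\le C(n,a)\sqrt B$.

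\emph{The term $\int_{\Rn}|u-v|^p$.} Work in polar coordinates. Using $v(r\nu)=a\,1_{[0,\k(\nu)]}(r)$, the elementary inequality $(a-s)^p\le a^p-s^p$ for $0\le s\le a$, $p\ge1$, and the mass identity built into \eqref{def of knu}, the $\nu$-slice of $\int_{\Rn}|u-v|^p$ is at most $2\de_\nu$, where $\de_\nu:=\int_{\k(\nu)}^\infty u(r\nu)^p r^{n-1}dr$; hence $\int_{\Rn}|u-v|^p\le2\int d\s(\nu)\,\de_\nu$. Let $\mu_\nu,\tilde\mu_\nu$ be the measures in \eqref{pushforwardTnu} and $F_{\mu_\nu},F_{\tilde\mu_\nu}$ their distribution functions. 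From \eqref{pushforwardTnu} with $H=1_{(-\infty,t]}$, Fubini, and \eqref{Tnu sotto r}, one checks that $\int_{\Rp}(r-T_\nu(r))\,u(r\nu)^p r^{n-1}dr=\int_0^\infty\bigl(F_{\tilde\mu_\nu}(r)-F_{\mu_\nu}(r)\bigr)dr=:g_\nu$. The function $f:=F_{\tilde\mu_\nu}-F_{\mu_\nu}$ is nonnegative, satisfies $f(\k(\nu))=\de_\nu$, and has $|f'(r)|\le a^p r^{n-1}$ (since $0\le u\le a$); consequently $f\ge\de_\nu/2$ on $[\k(\nu),\k_1]$ with $\k_1^n=\k(\nu)^n+\frac{n\de_\nu}{2a^p}$, so that $g_\nu\ge\frac{\de_\nu}2\,(\k_1-\k(\nu))\ge c(n,a)\,\de_\nu^2\,\k_1^{1-n}$. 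Since $\k(\nu)^n=\frac{n}{a^p}\,m_\nu$ with $m_\nu:=\int_{\Rp}u(r\nu)^p r^{n-1}dr\ge\de_\nu$, one has $\k_1^{1-n}\ge c(n,a)\,m_\nu^{-(n-1)/n}$, hence $\de_\nu^2\le C(n,a)\,m_\nu^{(n-1)/n}g_\nu$. Finally, by Cauchy--Schwarz on $\Sn$, by Jensen's inequality for the concave function $t\mapsto t^{(n-1)/n}$, and by $\int d\s(\nu)\,m_\nu=\int_{\Rn}u^p\le1$,
\[
\int d\s(\nu)\,\de_\nu\ \le\ C(n,a)\Bigl(\int d\s(\nu)\,m_\nu^{(n-1)/n}\Bigr)^{1/2}\Bigl(\int d\s(\nu)\,g_\nu\Bigr)^{1/2}\ \le\ C(n,p,a)\,\sqrt{A}\,.
\]

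Combining the three estimates, $\int_{\Rn}|u-w|^p\le2^{p-1}\bigl(\int_{\Rn}|u-v|^p+\int_{\Rn}|v-w|^p\bigr)\le C(n,p,a)(\sqrt A+\sqrt B)\le C(n,p,a)\sqrt{(\F(w)-\F(u))/\l}$, which is \eqref{tesi cor}. The only genuinely non-routine point is the passage from the transport costs $A$, $B$ to the $L^p$-distances: in both cases the key fact is that transporting a quantity $m$ of mass \emph{outward} across the sphere $\{|x|=R\}$ (respectively across $\{r=\k(\nu)\}$ on a fixed ray) costs at least a constant times $m^2$ — a quadratic lower bound that plays here the role of a quantitative isoperimetric inequality and is responsible for the square root in \eqref{tesi cor}; the extra subtlety is that this radial lower bound carries a $\nu$-dependent weight which must be absorbed, upon integrating over $\Sn$, through the global constraint $\int_{\Rn}u^p\le1$.
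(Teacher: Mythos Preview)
Your argument is correct and follows the same architecture as the paper: split $\F(w)-\F(u)$ via the intermediate function $v=a\,1_G$, extract from \eqref{l} the two transport costs you call $A$ and $B$ (the paper's \eqref{quant2} and \eqref{quant1}), turn each into an $L^p$ bound, and finish the radial step with Cauchy--Schwarz and Jensen over $\Sn$ using $\int u^p\le1$. The differences are purely tactical. For $\|v-w\|_p$ the paper slices $G\setminus E$ by a shell of width $\e$, bounds $|G\setminus B_{R+\e}|\le\de/(\l\e)$ from \eqref{quant1}, and optimizes in $\e$; this produces the two-term maximum in \eqref{end of step 1} and forces a separate case $\de\ge\l$ in its Step three. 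Your rearrangement to concentric shells plus the convexity of $s\mapsto s^{(n+1)/n}$ gives $B\ge c\,m^2$ in one stroke and avoids that case split. For $\|u-v\|_p$ the paper bounds $T_\nu(r)$ explicitly by $(r^n-n\tau_2/a^p)^{1/n}$ on $[\k(\nu),\infty)$, while you obtain the equivalent estimate $\de_\nu^2\le C\,m_\nu^{(n-1)/n}g_\nu$ through the distribution-function identity $g_\nu=\int(F_{\tilde\mu_\nu}-F_{\mu_\nu})$; the two computations are interchangeable. One cosmetic point: your intermediate constants written $c(n,a)$ actually depend on $p$ through $a^p$ (e.g.\ via $R$ and $\k_1$), but since the final constant is $C(n,p,a)$ this is harmless.
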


\begin{proof} Let $\delta:=\F(w)-\F(u)$. Thanks to \eqref{l}, from \eqref{u<v} and \eqref{v<w} we find that
\begin{eqnarray}\label{quant1}
\de&\ge&\l\int_{G\setminus E} (|x|-|T(x)|)dx\,,
\\ \label{quant2}
\de&\ge&\l\int d\s(\nu)\int_0^\infty (r-T_\nu(r))\frac{u(r\nu)^p}{a^p}r^{n-1}dr\,.
\end{eqnarray}
We now consider \eqref{quant1} and \eqref{quant2} separately:

{\it Step one:} Let $\e\in(0,R)$, then $(R+\e)^n\le R^n+c(n)R^{n-1}\e$. Thus
\begin{eqnarray*}
\frac1{a^p}\int_{\R^n}|w-v|^p&=&|E\Delta G|=2|G\setminus E|
\le2\{|G\setminus B_{R+\e}|+|B_{R+\e}\setminus B_R|\}
\\
&\le& C(n)\left\{|G\setminus B_{R+\e}|+\e R^{n-1}\right\}\,.
\end{eqnarray*}
If $x\in G\setminus B_{R+\e}$ then $|x|\ge R+\e\ge|T(x)|+\e$. By \eqref{quant1} we have $|G\setminus B_{R+\e}|\le (\de/\e\l)$, therefore we come to
\[
\frac1{a^p}\int_{\R^n}|w-v|^p\le C(n)\left\{\frac{\de}{\l\e}+\e R^{n-1}\right\}\,.
\]
We minimize over $\e\in[0,R]$ and find
\begin{equation}\label{end of step 1}
\frac1{a^p}\int_{\R^n}|w-v|^p\le C(n)\max\left\{\sqrt{\frac{\de R^{n-1}}{\l}},\frac{\de}{\l R}\right\}\,.
\end{equation}

{\it Step two:} We start by noticing that
\begin{eqnarray*}
\int_{\R^n}|u-v|^p&=&\int_G (a-u)^p+\int_{\R^n\setminus G}u^p
\le \int_G (a^p-u^p)+\int_{\R^n\setminus G}u^p=2\int_{\R^n\setminus G}u^p
\\&=&2\int\tau_2(\nu)d\s(\nu)\,,
\end{eqnarray*}
where, for every $\nu\in\Sn$, we have set
\[
\tau_1(\nu):=\int_0^{\k(\nu)}u(r\nu)^pr^{n-1}dr\,,\quad
\tau_2(\nu):=\int_{\k(\nu)}^\infty u(r\nu)^pr^{n-1}dr\,.
\]
Since $a^p\k(\nu)^n/n=\tau_1(\nu)+\tau_2(\nu)$, we have that
\begin{eqnarray*}
a^p\frac{T_\nu(r)^n}n&\le&\tau_1(\nu)+\int_{\k(\nu)}^r u(t\nu)t^{n-1}dt\le\tau_1(\nu)+a^p\frac{r^n}{n}-a^p\frac{\k(\nu)^n}n\,,
\end{eqnarray*}
for every $r\ge\k(\nu)$, i.e.,
\[
T_\nu(r)\le\left(r^n-\frac{n\tau_2(\nu)}{a^p}\right)^{1/n}\,,\quad\forall r\ge \k(\nu)\,.
\]
Then by \eqref{quant2} we deduce that
\begin{eqnarray*}
\frac{\de}{\l}&\ge&\int d\s(\nu)\int_{\k(\nu)}^\infty \left[r-\left(r^n-\frac{n\tau_2(\nu)}{a^p}\right)^{1/n}\right]u(r\nu)^pr^{n-1}dr
\\
&\ge&\int d\s(\nu)\int_{\k(\nu)}^\infty \frac{\tau_2(\nu)}{a^p}\left(r^n-\frac{n\tau_2(\nu)}{a^p}\right)^{(1/n)-1}u(r\nu)^pr^{n-1}dr
\\
&\ge&\int \frac{\tau_2(\nu)^2}{a^p}\left(\k(\nu)^n-\frac{n\tau_2(\nu)}{a^p}\right)^{(1/n)-1}d\s(\nu)
\\
&=&\frac{c(n)}{a^{p/n}}\int\tau_2(\nu)^2\tau_1(\nu)^{(1/n)-1}d\s(\nu)\,.
\end{eqnarray*}
By H\"older inequality
\begin{eqnarray*}
\int_{\R^n}|u-v|^p\le 2\int\tau_2(\nu)d\s(\nu)\le C(n) \sqrt{a^{p/n}\frac\de\l}\sqrt{\int \tau_1(\nu)^{1-(1/n)}d\s(\nu)}\,.
\end{eqnarray*}
By Jensen inequality for concave functions,
\begin{eqnarray*}
\int \tau_1(\nu)^{1-(1/n)}d\s(\nu)&\le& C(n)\left(\int \tau_1(\nu)d\s(\nu)\right)^{1-(1/n)}
\\&\le& C(n)\left(\int a^p\frac{\k(\nu)^n}{n}d\s(\nu)\right)^{1-(1/n)}= C(n)\,,
\end{eqnarray*}
and we come to conclude that
\begin{equation}\label{end of step 2}
\int_{\R^n}|u-v|^p\le C(n)\sqrt{a^{p/n}\frac{\de}{\l}}
\end{equation}

{\it Step three:} As $\int |w-u|^p\le 2^p$, \eqref{tesi cor} follows trivially whenever $\de\ge\le$. Let us now assume that $\de\le\l$, then \eqref{tesi cor} is easily deduced from \eqref{end of step 1} and \eqref{end of step 2}.
\end{proof}

\end{document}